\subjclass{Primary: 37C10; Secondary: 37D20}
\keywords{Gaussian Thermostats, Topological Entropy}
\title[Estimates of the Derivative of the Entropy of Gaussian Thermostats]
      {Estimates of the Derivative of the Entropy of Gaussian Thermostats}
\author[A. Arbieto]{A. Arbieto}
\address{Instituto de Matem\'atica, Universidade Federal do Rio de Janeiro, P. O. Box 68530, 21945-970 Rio de Janeiro, Brazil.}
\email{arbieto@im.ufrj.br}
\author[A. Lopes]{A. Lopes}
\address{Instituto de Matem\'atica, Universidade Federal de Uberl\^andia, Minas Gerais, Brazil.}
\email{adilson@famat.ufu.br}
\thanks{Partially supported by CNPq, FAPERJ and PRONEX/DS from Brazil. The second author was supported by the grant ``Bolsa Nota 10'' by Faperj in his PhD. The first author wants to thanks the hospitality of the Federal University of Uberl\^andia.}
\newtheorem{theorem}{Theorem}
\newtheorem{corollary}[theorem]{Corollary}
\newtheorem{lemma}[theorem]{Lemma}
\theoremstyle{definition}
\newcommand{\R}{\mathbb{R}}
\newcommand{\eps}{\varepsilon}
\newcommand{\re}{\rightarrow}
\newcommand{\al}{\alpha}
\newcommand{\rea}{{\mathbb R}}
\newcommand{\ep}{\epsilon}
\newcommand{\om}{\omega}
\newcommand{\be}{\beta}
\newcommand{\ga}{\gamma}
\newcommand{\la}{\lambda}
\newcommand{\pt}{\varphi}
\newcommand{\ptt}{{\varphi}_{t}}
\newcommand{\te}{\theta}
\newcommand{\noi}{\noindent}
\newcommand{\ms}{\Sigma}
\newcommand{\vp}{{\varphi}_{t}}
\newcommand{\ld}{\lambda\mapsto}
\newcommand{\yv}{E-\frac{\left\langle E,v\right\rangle}{{\left|v\right|}^{2}}v}
\newcommand{\yg}{E-\frac{\left\langle E,\dot{\gamma}\right\rangle}{{\left|\dot{\gamma}\right|}^{2}}\dot{\gamma}}
\newcommand{\fg}{\kappa}
\newcommand{\ty}{\tilde{Y}}
\newcommand{\fl}{\varphi_{t}^{\lambda}}
\newcommand{\jx}{J_{\xi}}
\newcommand{\dej}{\dot{J_{\xi}}}
\newcommand{\ddj}{\ddot{J_{\xi}}}
\newcommand{\dg}{\dot{\gamma}}
\newcommand{\fv}{\frac{1}{{\left|v\right|}^{2}}}
\newcommand{\yvl}{E_{\lambda}(p)-\frac{\left\langle E_{\lambda}(p),v\right\rangle}{{\left|v\right|}^{2}}v}
\newcommand{\pe}{E'_{0}}
\newcommand{\pas}{\frac{\partial f}{\partial s}}
\newcommand{\pat}{\frac{\partial f}{\partial t}}
\newcommand{\gz}{\dot{\gamma}_{0}}
\newcommand{\gl}{\dot{\gamma}_{\lambda}}
\newcommand{\ml}{\Lambda}
\newcommand{\izt}{\int_{0}^{T}}
\newcommand{\ygpz}{\frac{\left\langle E'_{0},\dot{\gamma_{0}}\right\rangle}{{\left|\dot{\gamma_{0}}\right|}^{2}}\dot{\gamma}_{0}}
\newcommand{\frgz}{\frac{1}{{\left|\gamma_{0}\right|}^{2}}}
\newcommand{\yygl}{E_{\lambda}-\frac{\left\langle E_{\lambda},\dot{\gamma_{\lambda}}\right\rangle}{{\left|\dot{\gamma_{\lambda}}\right|}^{2}}\dot{\gamma}_{\lambda}}
\newcommand{\glb}{\bar{\gamma}_{\lambda}}
\newcommand{\glbp}{\dot{\bar{\gamma}}_{\lambda}}
\newcommand{\dct}{\frac{D}{dt}}
\newcommand{\dcl}{\frac{D}{d\lambda}}
\newcommand{\mt}{\Theta}
\newcommand{\tl}{T_{\lambda}}
\newcommand{\all}{{\alpha}^{\lambda}}
\newcommand{\bl}{{\beta}^{\lambda}}
\newcommand{\ene}{{\mathcal{E}}}
\begin{document}

\begin{abstract}
We consider a variation of an Anosov geodesic flow by Gaussian Thermostats and we obtain estimates of the derivative of the entropy map at the geodesic flow. In particular, we prove that the entropy of the geodesic flow is a local maximum for the entropy map.
\end{abstract}

\maketitle

\section{Introduction}

The topological entropy of a flow is well known quantity that measures the complexity of the flow. It is a central theme in the theory of dynamical systems and a lot of research was done to understand and calculate it. So given a flow with certain properties it is a natural question to obtain bounds for the entropy. Moreover, given a family of flows it is nice to know how the topological entropy varies in it. 

In this article, we deal with such question in a family formed by Gaussian Thermostats. These flows provide interesting models in nonequilibrium statistical mechanics, see \cite{Galavotti} and \cite{Ruelle}.

Let $(M,g)$ be a closed Riemannian manifold. We denote a point  in $TM$ by $(p,v)=\theta\in TM$, where $p\in M$ and $v\in T_pM$ . In these coordinates, we can write the geodesic flow (see \cite{P.livro}) as the following equation:
\begin{align}
	\frac{dp}{dt}=v\ \ \ \ \ \  \frac{Dv}{dt}=0.
	\label{equ.geo}
\end{align}

Let $t\re\left(\gamma(t),\gamma'(t)\right)$ be a curve in $TM$. This curve is an integral curve to the geodesic field $G(\theta)=(v,0)$ if and  only if
\begin{align}
	\frac{D}{dt}\gamma(t)=0.
\end{align}
This shows that the solutions to the equation in ($\ref{equ.geo}$) are exactly the same of the geodesic field $G$. Analogously, we can define  the Gaussian Thermostat vector field as:
\begin{align}
	\frac{dp}{dt}=v\ \ \ \ \ \ \ \ \ \  \frac{Dv}{dt}=\yv
	\label{d2tg}
\end{align}

Now, we assume that the geodesic flow $\vp:SM\re SM$ is Anosov. Let $\ld E(\la)$ be a family of vector fields on $M$ such that $E(0)=0$.

So we can consider the vector fields $F_{\la}$ given by  $F_{\la}(\te)=(v,\yvl)$. Let us denote by $\fl:SM\re SM$ the flow generated by $F_{\la}$.

We recall the concept of entropy (see \cite{Walters}). If $f:(X,d)\to (X,d)$ is a homeomorphism then let us define $d_n(x,y)=\max \{d(f^i(x),f^i(y));\textrm{ for }$ $i=0,\dots,n-1\}$.

A subset $A$ of $X$ is said to be $(n,\eps)$-separated if each pair of distinct points of $A$ is at least $\eps$ apart in the metric $d_n$. Denote by $N(n, \eps)$ the maximum cardinality of an $(n, \eps)$-separated set. The topological entropy of $f$ is defined by:
$$h_{top}(f)=\lim_{\eps\to 0}\limsup_{n\to \infty}\frac{1}{n}\log N(n,\eps).$$

If $X_t$ is a flow then we set $h_{top}(X_t)=h_{top}(X_1)$.
Let us define $h(\lambda):=h_{top}(\fl)$, the entropy map associated to this variation.

We remark that by structural stability the flows $\fl:SM\re SM$ are Anosov for $\la\in (-\eps,\eps)$, where $\eps$ is small.

We set $E'_0=\frac{dE_{\la}}{d\la}\big|_{\la=0}$. Now, we define the operator $Z:TM\re TM$ such that for every $(p,v)=\te\in TM$ we have
\begin{align*}
	Z_{\te}(\cdot)=\fv\left(\left\langle v,\cdot\right\rangle  \pe(p)    -   \left\langle \pe(p),\cdot\right\rangle v        \right).
\end{align*}
Our main result is the following estimate

\begin{theorem} The first derivative of the entropy map is zero, i.e., $h'(0)=0$. Moreover, if $Z\neq 0$ then
\begin{align*}
	h''(0)   \leq    -h(0)\frac{\left[\int_{SM}\left( \left|\pe(p)\right|^{2}  - \frac{\left\langle \pe(p),v\right\rangle^{2}}{\left|v\right|^{2}}                 \right)    dm              \right]^{2}}{  \int_{SM} \left( \left|\nabla_{v}\pe(p)\right|^{2} -\frac{\left\langle \nabla_{v}\pe(p),v\right\rangle^{2}}{\left|v\right|^{2}}  - \left\langle R(v,\pe(p))v,\pe(p)\right\rangle                     \right)dm                 } < 0
\end{align*}

\noi where $m$ is the maximal entropy measure of the geodesic flow  $\varphi_t^0$.
\label{teo1.1tg}
\end{theorem}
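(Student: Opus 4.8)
The plan is to combine the $C^{2}$ dependence of the topological entropy of an Anosov flow on parameters, furnished by the thermodynamic formalism, with a second order perturbation analysis of the closed orbits of $\fl$. By structural stability each $\fl$ is a transitive Anosov flow orbit-equivalent to $\varphi_t^{0}$, hence, after a H\"older conjugacy, a time change of $\varphi_t^{0}$ by a positive speed function $a_\lambda$ with $a_{0}\equiv 1$ and $\int_{\gamma}a_\lambda\,ds=T_\lambda(\gamma)$, the period of the closed $\fl$-orbit shadowing the closed geodesic $\gamma$. By Abramov's formula $h(\lambda)$ is then the unique number for which $P_{\varphi^{0}}(-h(\lambda)\,a_\lambda)=0$, where $P_{\varphi^{0}}$ is the pressure of the geodesic flow; since $\lambda\mapsto a_\lambda$ is a $C^{2}$ family, so is $h$, and I would differentiate this identity.

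First I would differentiate once. The equilibrium state of the constant potential $-h(0)$ is the maximal measure $m$, so the first variation of pressure gives $h'(0)=-h(0)\int_{SM}a_{1}\,dm$ with $a_{1}=\partial_\lambda a_\lambda|_{0}$. A closed geodesic is a critical point of length for perpendicular variations, so the first order change of every period vanishes: $\int_{\gamma}a_{1}\,ds=0$ for all closed orbits $\gamma$. By the Liv\v{s}ic theorem $a_{1}$ is a coboundary, whence $\int_{SM}a_{1}\,dm=0$ and $h'(0)=0$.

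For the second derivative I would use the second variation of pressure, $0=\int_{SM}\ddot\Phi\,dm+\sigma_m^{2}(\partial_\lambda\Phi|_{0})$, where $\Phi_\lambda=-h(\lambda)a_\lambda$ and $\sigma_m^{2}$ is the asymptotic variance. Since $\partial_\lambda\Phi|_{0}=-h(0)a_{1}$ is a coboundary its variance vanishes, leaving $h''(0)=-h(0)\int_{SM}\ddot a\,dm$ with $\ddot a=\partial_\lambda^{2}a_\lambda|_{0}$. The geometric heart is the evaluation of $\int\ddot a\,dm$. Writing the first order transverse displacement of the thermostat orbit off the geodesic as a normal field $W$, the thermostat equation $\frac{Dv}{dt}=\yv$ forces, at first order, the inhomogeneous Jacobi equation $\nabla_v^{2}W+R(v,W)v=Y$, where $Y(\theta)=\pe(p)-\frac{\langle\pe(p),v\rangle}{|v|^{2}}v$ is the normal component of $\pe$. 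The second order change of the period is then exactly the index form $\int_{\gamma}(|\nabla_v W|^{2}-\langle R(v,W)v,W\rangle)\,ds=\langle Y,L^{-1}Y\rangle_\gamma$, where $LW=-\nabla_v^{2}W-R(v,W)v$ is the Jacobi operator on the normal bundle; averaging over closed orbits, which equidistribute with respect to $m$, identifies $\int_{SM}\ddot a\,dm=\langle Y,L^{-1}Y\rangle$ in $L^{2}(SM,m)$, so that $h''(0)=-h(0)\,\langle Y,L^{-1}Y\rangle$.

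Finally I would invoke the Kantorovich inequality for the positive self-adjoint operator $L$, namely $\langle Y,LY\rangle\,\langle Y,L^{-1}Y\rangle\ge\langle Y,Y\rangle^{2}$. Here $\langle Y,Y\rangle=\int_{SM}(|\pe(p)|^{2}-\frac{\langle\pe(p),v\rangle^{2}}{|v|^{2}})\,dm$ is the quantity squared in the numerator and $\langle Y,LY\rangle=\int_{SM}(|\nabla_v\pe(p)|^{2}-\frac{\langle\nabla_v\pe(p),v\rangle^{2}}{|v|^{2}}-\langle R(v,\pe(p))v,\pe(p)\rangle)\,dm$ is the denominator, so $\langle Y,L^{-1}Y\rangle\ge\langle Y,Y\rangle^{2}/\langle Y,LY\rangle$ yields the stated bound; it is strict and negative precisely when $Y\not\equiv 0$, i.e. when $Z\neq 0$. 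The main obstacle is the third step: making the second order orbit perturbation rigorous and, above all, justifying the passage from the orbitwise index forms to the global operator $L$ on $L^{2}(SM,m)$ via equidistribution of periodic orbits, together with the invertibility (positivity) of $L$ — it is exactly here that the Anosov hypothesis, equivalent to the absence of conjugate points and to a uniform positive lower bound for the index form, is indispensable.
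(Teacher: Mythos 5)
Your proposal tracks the paper's actual proof almost step for step through its first two stages. The paper likewise reduces everything to Pollicott's derivative formulas for Anosov families (your pressure/Abramov derivation is the same computation, just unwound from the reference), kills $h'(0)$ and the variance term by observing that closed geodesics are critical points of energy, so the first-order period change vanishes on every closed orbit and is therefore a coboundary by Livsic's theorem, and it derives exactly your inhomogeneous Jacobi equation $\ddot W + R(\dot\gamma_0,W)\dot\gamma_0 = E'_0 - \langle E'_0,\dot\gamma_0\rangle\dot\gamma_0/|\dot\gamma_0|^2$ for the displacement field (Lemma \ref{lema4.3}). The divergence --- and the genuine gap --- is your third step. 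You want to write $h''(0) = -h(0)\langle Y, L^{-1}Y\rangle$ with $L$ a globally defined, positive, invertible Jacobi operator on $L^2(SM,m)$, and you yourself flag the construction of this operator and the equidistribution identification as the unresolved obstacle. It is a real one: $m$-almost every orbit is not periodic, so there is no boundary condition making $L$ a self-adjoint invertible operator along a typical orbit, and ``$L^{-1}Y$'' has no evident meaning in $L^2(SM,m)$; the orbitwise solution $W_\gamma$ exists only on periodic orbits and comes with no a priori uniform control, so passing the quantities $\langle Y, L^{-1}Y\rangle_\gamma$ through a weak-$*$ limit of periodic-orbit measures is exactly what you cannot yet justify.

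The paper's device is designed to never invert $L$. Orbit by orbit, positivity of the index form $I$ (Morse index theorem: Anosov implies no conjugate points) gives $I(W + xY, W + xY)\geq 0$ for \emph{every} real $x$, hence $I(W,W)\geq -2x\,I(W,Y) - x^2 I(Y,Y)$. The Jacobi equation is used only to evaluate the cross term, $I(W,Y) = -\int_0^T\bigl(|E'_0|^2 - \langle E'_0,\dot\gamma_0\rangle^2/|\dot\gamma_0|^2\bigr)\,dt$, which requires no inversion, while $I(W,W)$ equals $\int_0^T D_0^2\beta\,dt$ by the second variation of energy (Lemma \ref{lema4.1tg}). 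For each fixed $x$ the unknown $W$ has now disappeared: both sides of the resulting inequality are orbit integrals of fixed continuous functions on $SM$, so the inequality passes to every invariant measure by density of periodic-orbit measures, in particular to $m$. Only after this averaging does one optimize, taking $x = A/B$ where $A$ and $B$ are the $m$-integrals appearing (squared) in the numerator and in the denominator of the stated bound; this yields $\int_{SM}D_0^2\beta\,dm\geq A^2/B$ and the theorem via $h''(0) = -h(0)\int_{SM}D_0^2\beta\,dm$. This carries the same Cauchy--Schwarz content as your final step (which, incidentally, is the Cauchy--Schwarz direction $\langle Y,Y\rangle^2 = \langle L^{1/2}Y,L^{-1/2}Y\rangle^2\leq\langle Y,LY\rangle\,\langle Y,L^{-1}Y\rangle$ rather than Kantorovich's reverse bound), but it is executed at the level of the quadratic form with a free scalar parameter, fixed before averaging and optimized after --- precisely the reordering that makes the passage to $m$ legitimate and renders your ``main obstacle'' moot.
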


This immediately implies the following result.

\begin{corollary}
If $Z\neq 0$ then $\ld h(\la)$ has a strict local maximum at $\la=0.$
\end{corollary}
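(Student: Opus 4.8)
The plan is to deduce the corollary directly from the Theorem by the classical second-derivative test, so no new analytic machinery is needed. Since the flows $\fl:SM\re SM$ are Anosov for all $\la\in(-\eps,\eps)$ by structural stability, the entropy map $\la\mapsto h(\la)$ is (at least) twice differentiable at $\la=0$; this regularity is exactly what permits the statements about $h'(0)$ and $h''(0)$ in the Theorem, and it is what I exploit here.

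First, the Theorem gives $h'(0)=0$, so $\la=0$ is a critical point of $h$. Next, under the hypothesis $Z\neq 0$, the Theorem furnishes the strict inequality $h''(0)<0$, since the right-hand side of its displayed estimate is manifestly negative (the numerator is a nonzero square times $-h(0)<0$, and the denominator is positive). Writing the second-order Taylor expansion of $h$ at $\la=0$,
\begin{align*}
    h(\la)=h(0)+h'(0)\,\la+\tfrac{1}{2}h''(0)\,\la^{2}+o(\la^{2})=h(0)+\tfrac{1}{2}h''(0)\,\la^{2}+o(\la^{2}),
\end{align*}
we see that, because $h''(0)<0$, the quadratic term dominates the remainder for $\la$ small, giving $h(\la)<h(0)$ for every sufficiently small $\la\neq 0$. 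Hence $\la=0$ is a strict local maximum of $h$, which is the assertion of the corollary.

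There is essentially no genuine obstacle in this deduction; the single point deserving a word is the justification that $h$ admits a second-order Taylor expansion at $\la=0$. But this is precisely the twice-differentiability already invoked to define $h''(0)$ in the Theorem, and the differentiability of the topological entropy along smooth families of Anosov flows is standard, so the second-derivative test applies verbatim. I would therefore keep the argument to the two lines above and merely record this remark on regularity.
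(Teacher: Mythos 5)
Your proof is correct and follows exactly the route the paper intends: the corollary is stated as an immediate consequence of the Theorem, deduced from $h'(0)=0$ and $h''(0)<0$ by the standard second-derivative test, with the required twice-differentiability of $\la\mapsto h(\la)$ supplied by Pollicott's results on smooth families of Anosov flows. Nothing further is needed.
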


A particular case of the previous theorem is $E(\la)=\la E$ where $E$ is a fixed vector field. In this case $\pe=E$ and $Z=\ty$ (defined below). The motivation of these results comes from the articles \cite{PP} and \cite{P.deriv.geo}.  The proof also relies on Pollicott's formula \cite{Pollicott}. However, in the case of the Gaussian Thermostat flow, the computations are a little bit different. For instance,  the variational field produced by the flow has much more terms, and now they need to be estimate. Also, in the magnetic case we could  consider the magnetic field (the Lorentz force) as a $(1,1)-tensor$ and we can take covariant derivate to differentiate it. However, for  Gaussian Thermostats this do not  occurs. Indeed,  the  Gaussian Thermostat field (defined below) is not a tensor in general and this also complicates the computations.

\section{Preliminars}

As in the introduction, we set $(M,g)$  as a closed Riemannian manifold. Let $H:TM\re \rea$ be the energy functional $H(p,v)=\frac{1}{2}g_{p}(v,v)$ and $E\in\mathfrak{X}(M)$  be a vector field on $M$.  

As we said before, the Gaussian Thermostat can be defined as the flow on $TM$ generated by the following equations.
\begin{align}
	\frac{dp}{dt}=v\ \ \ \ \ \ \ \ \ \  \frac{Dv}{dt}=\yv
	\label{d2tg}
\end{align}
where  $p\in M$ and $\frac{D}{dt}$ is the covariant derivative. This is a simple way of consider the Gaussian Thermostat, but for our purposes it is better to consider an equivalent definition.

Let us consider two 1-forms on $TM$ as follows
\begin{align*}
	\al\left(\left(v_1, v_2\right)\right)=g_{p}\left(v,v_1\right)\ \ \ \ \ \ \ \ \be\left(\left(v_1,v_2\right)\right)=g_{p}(E(p),v_1)
\end{align*}
for every  $(v_1, v_2)\in T_{\te}TM\equiv T_{p}M\times T_{p}M$. For simplicity,  we denote $g_{p}(\cdot,\cdot)=\left\langle \cdot,\cdot\right\rangle$, sometimes we will also omit $p$ . Let  $\tilde{Y}:TM\re TM$ the operator such that for every  $u\in T_{p}M$ we have
\begin{align*}
	\tilde{Y}_{\te}(u)=\frac{1}{{\left|v\right|}^{2}}\left(\left\langle v,u\right\rangle E-\left\langle E,u\right\rangle v\right).
\end{align*}
We notice that for every $\te\in TM$,  $\tilde{Y}_{\te}:T_{p}M\re T_{p}M$ is anti-symmetric.

Let $\fg$ the 2-form on $TM$ such that for every $\te\in TM$ we have
\begin{align*}
	\fg_{\te}(\cdot,\cdot)=\left\langle \tilde{Y}_{\te}\left(d_{\te}\pi(\cdot)\right),d_{\te}\pi(\cdot)\right\rangle.
\end{align*}

Then, we define a vector field $F:TM\re TTM$  such that for $\xi\in T_{\te}TM$ we have
\begin{align*}
	\left\langle \left\langle d_{\te}H,\xi\right\rangle\right\rangle=d_{\te}H(\xi)=\om_{\te}\left(F(\te),\xi\right)
\end{align*}
Here $\left\langle \left\langle ,\right\rangle\right\rangle$ is the Sasaki metric and $\om=\om_{0}+\fg$, where $\om_{0}$ is the canonical symplectic form of $TM$. Usually, we denote $dH=i_{F}\om$. 

We remark that since $k$ is only a non-degenerated 2-form we will not use the notation $X_H$ for symplectic gradients.

In general, the Gaussian Thermostats are not Hamiltonian. However they preserve the energy levels. Let $\ms=H^{-1}(e)$ where $e\in \rea$. For every $\te\in \ms$ the orbit $\sigma_{\te}(t)\equiv\vp(\te)$ satisfies

\begin{align*}
\frac{d}{dt}H\left(\sigma_{\te}(t)\right)=d_{\sigma_{\te}(t)}H\left(F\left(\sigma_{\te}(t)\right)\right)=\om_{\sigma_{\te}(t)}\left(F\left(\sigma_{\te}(t)\right),F\left(\sigma_{\te}(t)\right)\right)=0,
\end{align*}
since $\om$ is a 2-form. Thus, $H\left(\sigma_{\te}(t)\right)=H(\te)=\ms$   for every $t\in\rea$. In particular, $\vp(\ms)\subset \ms$.

It is not difficult to see that we can define the Gaussian Thermostat of $E$ as the flow $\vp$ on $TM$ generated by the vector field $F$. 

\begin{lemma}
The vector field $F$ satisfies
\begin{align}
	F(\te)=\left(v,\yv\right).
	\label{d3tg}
\end{align}
\end{lemma}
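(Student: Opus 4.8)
The plan is to evaluate the defining relation $d_\te H(\xi)=\om_\te(F(\te),\xi)$ in the horizontal–vertical splitting $T_\te TM\cong T_pM\times T_pM$ coming from the Levi-Civita connection, and then to read off $F(\te)$ by comparing components. First I would fix the identification so that a tangent vector is written $\xi=(v_1,v_2)$ with horizontal part $v_1=d_\te\pi(\xi)$ and vertical part $v_2$; this is the same convention already used to define $\al,\be$ and $\fg$. In this splitting the energy $H(p,v)=\frac12\langle v,v\rangle$ has differential $d_\te H(v_1,v_2)=\langle v,v_2\rangle$, because $H$ is constant along horizontal (parallel) transport while its vertical derivative is $\langle v,\cdot\rangle$.

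Next I would record the coordinate form of the canonical symplectic structure, namely $\om_{0}\big((v_1,v_2),(w_1,w_2)\big)=\langle v_1,w_2\rangle-\langle v_2,w_1\rangle$; the appropriate sign is pinned down by the sanity check that $dH=i_{G}\om_0$ is solved by the geodesic field $G=(v,0)$, which is exactly the case $E=0$. Since $\fg_\te(\xi,\eta)=\langle\ty_\te(d_\te\pi\,\xi),d_\te\pi\,\eta\rangle$ only sees horizontal parts, writing the unknown as $F(\te)=(a,b)$ gives
\begin{align*}
\om_\te(F(\te),\xi)=\langle a,v_2\rangle-\langle b,v_1\rangle+\langle\ty_\te(a),v_1\rangle.
\end{align*}
Setting this equal to $d_\te H(\xi)=\langle v,v_2\rangle$ for every $\xi=(v_1,v_2)$ and matching the $v_2$- and $v_1$-slots forces $a=v$ and $b=\ty_\te(a)=\ty_\te(v)$. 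A one-line computation then gives $\ty_\te(v)=\fv\big(\langle v,v\rangle E-\langle E,v\rangle v\big)=\yv$, which is precisely the second component in \reff{d3tg}.

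The main difficulty here is bookkeeping rather than conceptual: one has to fix the sign conventions for $\om_0$ and for the horizontal–vertical splitting consistently, so that the $E=0$ case genuinely reproduces the geodesic vector field, and one should confirm that $\om=\om_0+\fg$ is nondegenerate so that $F$ is well defined. The nondegeneracy is immediate from the structure just described: $\fg$ lives entirely in the horizontal–horizontal block, whereas $\om_0$ pairs horizontal against vertical, so the mixed blocks stay invertible and adding $\fg$ cannot destroy nondegeneracy. With these conventions fixed the comparison of components above is forced and the identity \reff{d3tg} follows.
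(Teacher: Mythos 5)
Your proposal is correct and follows essentially the same route as the paper's proof: decompose $T_\te TM$ into horizontal and vertical components, compute $d_\te H(\xi)=\langle v,\xi_v\rangle$, expand $\om=\om_0+\fg$ against an unknown $F(\te)=(a,b)$, and match the coefficients of the vertical and horizontal slots to get $a=v$ and $b=\ty_\te(v)=\yv$. Your added sanity check via the geodesic case $E=0$ and the remark on nondegeneracy of $\om$ are sensible supplements but do not change the argument.
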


\begin{proof}

Let $(\xi_{h},\xi_{v})=\xi\in T_{\te}\ms$, we have 
\begin{align*}
	d_{\te}H(\xi)=\left\langle v,\xi_{v}\right\rangle
\end{align*}

In the other hand,
\begin{align*}
	\om_0(F(\te),\xi)=\left\langle F(\te)_{h},\xi_v \right\rangle -   \left\langle F(\te)_{v},\xi_h\right\rangle,
\end{align*}
and
\begin{align*}
	\fg\left(F(\te),\xi\right)=\left\langle \tilde{Y}_{\te}\left(d_{\te}\pi(F(\te))\right),d_{\te}\pi(\xi)\right\rangle= \left\langle \tilde{Y}_{\te}\left(F(\te)_h\right),\xi_h\right\rangle.
\end{align*}
So
\begin{align*}
\om(F(\te),\xi)=\left(\left\langle F(\te)_{h},\xi_{v} \right\rangle  -    \left\langle \xi_{h}, F(\te)_{v}-\frac{1}{\left|v\right|^2} \left[ \left\langle v,F(\te)_{h}\right\rangle E -   \left\langle E,F(\te)_{v}\right\rangle    v     \right]\right\rangle                                \right).
\end{align*}
By definition, we have \begin{align*}
	d_{\te}H(\xi)=\om\left(F(\te),\xi\right)
\end{align*}
Thus
\begin{align*}
\left\langle v,\xi_{v}\right\rangle=\left(\left\langle F(\te)_{h},\xi_{v} \right\rangle  -    \left\langle \xi_{h}, F(\te)_{v}-\frac{1}{\left|v\right|^2} \left[ \left\langle v,F(\te)_{h}\right\rangle E -   \left\langle E,F(\te)_{h}\right\rangle    v     \right]\right\rangle                                \right).
\end{align*}
Henceforth,
\begin{align*}
	F(\te)_{h}=v\ \ \  \textrm{e}\ \ \ \ F(\te)_{v}= E - \frac{\left\langle E,v\right\rangle}{\left|v\right|^2}v,
\end{align*}
\end{proof}

As in \cite{WOJTKOWSKI}, if the vector field $E$ has a global potential $U$ then the Gaussian Thermostat preserves the measure $\mu=e^{-\frac{(n-1)U}{{\left|v\right|}^{2}}}dx$, where $dx$ denotes the standard volume element.

\section{Jacobi Fields and Pollicott's Formula}

In this section we collect two useful formulas, the first one is a Jacobi-type equation for  Gaussian Thermostats, the second one is due to Pollicott that describes how the entropy varies in an Anosov family.

Let $\vp:SM\re SM$  a Gaussian Thermostat flow as before. Let   $t\mapsto \left(\ga,\dg\right)$  an orbit of $\vp$. In particular,
 \begin{align}
\frac{D}{dt}\dg=\ty_{\ga}(\dg)=\yg,
\label{e1tg}	 
 \end{align}
where $D/dt$ denotes the covariant derivative and  $\dg=d\ga/dt$. 

Let $z:(-\ep,\ep)\re TM$ such that $z(0)=\te$ and $\dot{z}(0)=\xi$. We let a variation $f(s,t)=\pi\left(\vp\left(z(s)\right)\right)$. Let $\jx(t)=\partial f /\partial s (0,t)$, $\ga_{s}(t)=f(s,t)$, and $\ga=\ga_{0}$. We recall the following identity from Riemannian geometry,
\begin{align*}
	\frac{D}{dt}\frac{D}{dt}\pas+R\left(\pat,\pas\right)\pat=\frac{D}{ds}\frac{D}{dt}\pas.
\end{align*}
By (\ref{e1tg}), we obtain
\begin{align*}
	\ddj(t)+R(\dg,\jx)\dg=\frac{D}{ds}(\ty_{\ga_{s}}(\dg_{s}))=\frac{D}{ds}\left( E(\ga_{s})-\frac{\left\langle E(\ga_{s}),\dot{\gamma_{s}}\right\rangle}{{\left|\dot{\gamma_{s}}\right|}^{2}}\dot{\gamma}_{s}          \right).
\end{align*}
In $s=0$, we obtain
\begin{align}
	\ddj+R(\dg,\jx)\dg     -   \nabla_{\jx}E  +   \frac{1}{\left|{\dg}\right|^{2}} \left(  \left\langle \nabla_{\jx}E,\dg\right\rangle\dg    +    \left\langle E,\dej\right\rangle\dg     -2 \left\langle \dej,\dg\right\rangle \frac{\left\langle E,\dg\right\rangle}{\left|\dg\right|^{2}}\dg      + \left\langle E,\dg\right\rangle \dej \right)=0,
	\label{cjtg}
 \end{align}
This is the Jacobi equation for Gaussian Thermostats.


Let $\ld\fl$ with $\la\in(-\eps,\eps)$ be a $C^{\infty}$ family of Anosov flows on a closed manifold $X$. By the structural stability theorem there exists maps $\all\in C^{s}(X)$, $\mt^{\la}\in C^{s}(X,X)$ (where $s$ depends on the unstable and stable foliations) such that
\begin{enumerate}
	\item[i)]$\al^{0}\equiv1$; $\mt^{0}\equiv I_{X}$,
	\item[ii)]$\mt^{\la}$ sends orbits of  $\vp^{0}$ on orbits of $\fl$,
	\item[iii)]$\all$ is a change of speed of $\vp^{0}$ that turns $\mt^{\la}$ a conjugacy. Moreover the maps $\ld \all, \mt^{\la}$ are $C^{\infty}$.
\end{enumerate}
We consider the Taylor expansion of  $\all$:
\begin{align*}
	\ld\all = 1  +\la\left(D_{0}\all\right)   +   \left(\al^{2}/2\right)\left(D_{0}^{2}\all\right)+  \dots
\end{align*}

Let $h(\la)$ the topological entropy of  $\fl$. We recall that the variance of $\varphi^ 0_t$ is defined as follows. If $m$ denotes the maximal entropy measure of $\varphi^0_t$ and $F:M\to \R$ is a H\"older continuous function then
$$Var(F)=\int_{-\infty}^{\infty}(\int F\circ \varphi_t^0.Fdm-(\int Fdm)^2)dt.$$

 In  \cite{Pollicott}, Pollicott obtains the following results.
\begin{theorem}
The first derivative of $h(\la)$ at $\la=0$ satisfies
\begin{align*}
	h'(0)  =h(0) \int_{X}\left(D_{0}\all\right)dm.
\end{align*}
Moreover, the second derivative of  $h(\la)$ at $\la=0$ satisfies
\begin{align}
	h''(0)= h(0)\left\{ Var\left(D_{0}\all\right) + \int_{X}\left(D_{0}^{2}\all\right)dm  + 2 \left(\int_{X}D_{0}\all dm\right)^{2}     - 2\int_{X}\left(D_{0}\all\right)^{2}dm  \right\},
\end{align}
where $m$ is the maximal entropy measure of  $\vp^{0}$ and Var is the variance of $\vp^{0}$.
\label{teo3.1tg}
\end{theorem}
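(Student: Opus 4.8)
The plan is to route everything through the thermodynamic formalism of the single fixed Anosov flow $\varphi^{0}$: I would exhibit $h(\la)$ as the implicitly defined root of a topological pressure equation for $\varphi^{0}$, and then differentiate that equation twice in $\la$. The structural input is (i)--(iii): each $\fl$ is orbit equivalent to $\varphi^{0}$ through the H\"older homeomorphism $\mt^{\la}$, and the two flows differ only by the time reparametrization recorded by the speed function $\all$, with $\all\equiv1$ at $\la=0$. Thus $\fl$ is conjugate to a time change of $\varphi^{0}$, and by the Abramov formula the entropy of such a time change is given variationally by
\begin{align*}
	h(\la)=\sup_{\mu}\frac{h_{\varphi^{0}}(\mu)}{\int_{X}(\all)^{-1}\,d\mu},
\end{align*}
the supremum being over $\varphi^{0}$-invariant probability measures. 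Equivalently, writing $P$ for the topological pressure of $\varphi^{0}$, the number $h(\la)$ is the unique real root of
\begin{align*}
	P\!\left(-\,h(\la)\,(\all)^{-1}\right)=0.
\end{align*}
At $\la=0$ this reads $P(-h(0))=h_{top}(\varphi^{0})-h(0)=0$, which simply recovers $h(0)=h_{top}(\varphi^{0})$ and fixes the normalization.

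Granting this reduction, both formulas follow by implicit differentiation, using the two classical facts about the pressure of $\varphi^{0}$: for a H\"older potential $f$ with equilibrium state $\mu_{f}$ one has $D_{f}P\,[g]=\int g\,d\mu_{f}$ and $D_{f}^{2}P\,[g,g]=Var_{\mu_{f}}(g)$, with $Var$ the flow variance defined above; at $\la=0$ the potential is the constant $-h(0)$, whose equilibrium state is exactly the measure of maximal entropy $m$. Put $\phi_{\la}=-h(\la)(\all)^{-1}$, so that $P(\phi_{\la})\equiv0$. Differentiating once gives $\int\dot{\phi}_{0}\,dm=0$; since $\dot{\phi}_{0}=-h'(0)+h(0)\,D_{0}\all$ and the additive constant is irrelevant under the integral, this yields $h'(0)=h(0)\int_{X}D_{0}\all\,dm$, the first assertion. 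Differentiating twice gives $Var_{m}(\dot{\phi}_{0})+\int\ddot{\phi}_{0}\,dm=0$. Expanding $\ddot{\phi}_{0}$ in terms of $h''(0)$, $h'(0)$, $D_{0}\all$ and $D_{0}^{2}\all$, discarding the additive constant inside the variance, and substituting the value of $h'(0)$ already found, one collects exactly the combination of a variance term $Var(D_{0}\all)$, the integral $\int D_{0}^{2}\all\,dm$, and the two quadratic corrections $2(\int D_{0}\all\,dm)^{2}$ and $-2\int(D_{0}\all)^{2}\,dm$, all carrying the common factor $h(0)$ displayed in the statement (the precise normalization of the variance term being dictated by the flow formalism, as discussed next).

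I expect the substantive work to lie entirely in the first paragraph, not in the differentiation. The delicate points are: justifying the Abramov/variational identity and hence the pressure equation for the reparametrized family, which is cleanest through a symbolic model of $\varphi^{0}$ (Markov partitions and the Bowen--Ruelle thermodynamic formalism); verifying that $\la\mapsto\all$ is regular enough that $(\la,h)\mapsto P(-h(\all)^{-1})$ is jointly smooth, so that the implicit function theorem applies and $\la\mapsto h(\la)$ is itself smooth; and tracking the flow-specific normalizations that fix the exact coefficient of the variance term, which enter through the mean of the reparametrization cocycle and the passage between the flow and its time-one map. Once the pressure equation and these normalizations are in place, the strict monotonicity of $h\mapsto P(-h(\all)^{-1})$ guarantees a unique smooth root, and both derivative formulas follow from the computation above.
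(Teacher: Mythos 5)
The paper does not actually prove Theorem \ref{teo3.1tg}: it is quoted from Pollicott \cite{Pollicott} and then applied. Your reconstruction --- structural stability plus Abramov's formula identifying $h(\la)$ as the unique root of the pressure equation $P\left(-h(\la)(\all)^{-1}\right)=0$ for the fixed Anosov flow $\varphi^{0}_{t}$, followed by implicit differentiation using $D_{f}P[g]=\int g\,d\mu_{f}$ and $D^{2}_{f}P[g,g]=Var_{\mu_{f}}(g)$ --- is in substance the route of the cited proof, and your first-derivative computation is correct.

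The gap is precisely the point you deferred as a ``normalization'': it cannot be made to come out as you claim. Carry out your own second differentiation. With $\phi_{\la}=-h(\la)(\all)^{-1}$ one has $\dot{\phi}_{0}=-h'(0)+h(0)D_{0}\all$, and since the variance defined in the paper (the integrated autocovariance) is quadratic and kills constants, the variance term is $Var_{m}(\dot{\phi}_{0})=h(0)^{2}\,Var(D_{0}\all)$; moreover $\ddot{\phi}_{0}=-h''(0)+2h'(0)D_{0}\all-h(0)\left(2(D_{0}\all)^{2}-D_{0}^{2}\all\right)$. Summing $Var_{m}(\dot{\phi}_{0})+\int\ddot{\phi}_{0}\,dm=0$ and substituting $h'(0)=h(0)\int D_{0}\all\,dm$ gives
\begin{align*}
h''(0)=h(0)^{2}\,Var\left(D_{0}\all\right)+h(0)\left\{\int_{X}D_{0}^{2}\all\,dm+2\left(\int_{X}D_{0}\all\,dm\right)^{2}-2\int_{X}\left(D_{0}\all\right)^{2}dm\right\},
\end{align*}
i.e.\ the variance enters with coefficient $h(0)^{2}$, not the $h(0)$ displayed in the statement. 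The discrepancy is real, not notational: replace the family $\fl$ by $\varphi^{\la}_{\kappa t}$, so that $\varphi^{0}_{t}$ becomes $\varphi^{0}_{\kappa t}$ while the orbit equivalences $\mt^{\la}$ and the speed functions $\all$ are unchanged. Then $h$ and all its $\la$-derivatives are multiplied by $\kappa$, the measure $m$ and the integrals of $D_{0}\all$, $(D_{0}\all)^{2}$, $D_{0}^{2}\all$ are unchanged, and the integrated autocovariance is multiplied by $1/\kappa$; only the $h(0)^{2}$ version scales consistently, and the $h(0)$ version fails already for pure time changes $\all=1+\la u$ with $u$ not cohomologous to a constant. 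So either the variance in \cite{Pollicott} carries a different normalization from the definition recalled in this paper, or the coefficient as transcribed here is off; in either case your argument, carried out in full, does not produce the displayed formula, and to close the proof you must either establish the $h(0)^{2}$ version or identify the variance normalization under which the displayed one is correct. (For this paper's application the issue is invisible: by Lemma \ref{lema4.1tg} and Livsic's theorem, $Var(D_{0}\all)=Var(D_{0}\be)=0$ and $\int_{SM}D_{0}\all\,dm=0$, so both versions reduce to $h''(0)=-h(0)\int_{SM}D_{0}^{2}\be\,dm$, which is all that is used.)
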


\section{Some Reductions}

In this section, we will apply the results in the previous sections to reduce the proof of the theorem.

Let $\mt^{\lambda}:SM\re SM$ the map of the previous section which sends orbits of $\vp=\vp^{0}$ on orbits of $\fl$. Let $\te\in SM$ such that $\pt^0_{t}(\te)$ is a periodic orbit of period $T$. Let $\ga_{0}=\pi\left(\vp^0(\te)\right)$. Hence,
\begin{align*}
	\ga_{\la}(t):=\pi\left(\fl(\Theta^{\lambda}(\te))\right),
\end{align*}
gives a  $C^{\infty}$ variation of the curve $\ga_{0}$,  since $\ld\mt^{\la}$ is $C^{\infty}$. 

Moreover the curves $\ga_{\la}$ of the variation are closed with period $T_{\la}$. We will see how $T_{\la}$ varies with respect to  $\la$.

Given a curve $c:[0,a]\re M$, we denote its length by $L(c)$. Its energy is denoted by $\ene(c)$ as follows
\begin{align*}
	\ene(c)=\int_{0}^{a}\left\langle \dot{c}(t),\dot{c}(t)\right\rangle  dt.
\end{align*}

Let $\Psi_{t}^{\la}$  the reparametrization $\fl$ by the change of speed $\al^{\la}$. Hence, $\mt^{\la}$ is a conjugacy between $\Psi_{t}^{\la}$ and $\fl$. Thus the orbits of $\Psi_{t}^{\la}(\te)$ are closed with periods $T_{\la}$. In particular, $T$ and  $T_{\la}$ satisfy the following relation.
\begin{align}
	T_{\la}=\int_{0}^{T}\frac{1}{\al^{\la}\left(\gz(t)\right)}dt.
	\label{rtlt}
\end{align}
 Let $\bl(t):=\frac{1}{\al^{\la}\left(\gz(t)\right)}$. We obtain the following result. 
\begin{lemma}
For every closed geodesic $\ga_{0}$ with period $T$ we have:
\begin{align*}
	\izt D_{0}\be\left(\gz(t)\right)dt=0\ \ \ \ \ \ \ \ and \ \ \ \ \ \ \ \izt D_{0}^{2}\be\left(\gz(t)\right)dt=\frac{1}{2}\frac{d^{2}\ene}{d\la^{2}}\Big|_{\la=0}. 
\end{align*}
\label{lema4.1tg}
\end{lemma}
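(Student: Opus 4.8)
The plan is to express the abstract Taylor coefficients $D_0\be$ and $D_0^2\be$ through the energy of the variation $\glb$ of the closed geodesic $\gz$, and then to read off the two identities from the first and second variation of the energy functional. The starting point is the period formula \reff{rtlt}, namely $\tl=\izt\bl(t)\,dt$. First I would fix the geometry of the variation: because $\fl$ preserves $SM$, every orbit $\ga_\la$ is traversed with unit speed, so its length coincides with its period, $L(\ga_\la)=\tl$. Reparametrizing $\ga_\la$ with constant speed on the fixed interval $[0,T]$ gives the closed curve $\glb$, whose length is still $\tl$ and whose energy therefore equals the Cauchy--Schwarz value
\begin{align*}
\ene(\glb)=\frac{L(\ga_\la)^2}{T}=\frac1T\left(\izt\bl(t)\,dt\right)^2 .
\end{align*}
At $\la=0$ the speed equals $\tl/T=1$, so $\glb$ reduces to the unit-speed geodesic $\gz$, and $\glb$ is a genuine $C^{\infty}$ variation of $\gz$ through closed curves.

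For the first identity I would use that a closed geodesic is a critical point of the energy functional among closed curves. Differentiating the displayed formula once gives $\frac{d\ene}{d\la}\big|_{0}=2\izt D_0\be\,dt$, while the first variation formula of Riemannian geometry gives $\frac{d\ene}{d\la}\big|_{0}=0$: the boundary terms cancel by periodicity and the remaining integrand carries the factor $\dct\gz=0$. Comparing the two expressions yields $\izt D_0\be(\gz(t))\,dt=0$, which is equivalent to $\frac{d\tl}{d\la}\big|_{0}=0$.

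For the second identity I would differentiate $\ene(\glb)=\frac1T(\izt\bl\,dt)^2$ twice at $\la=0$. Using $\izt D_0\be\,dt=0$ from the previous step, the cross term vanishes and only the pure second-order contribution survives, so that $\frac{d^2\ene}{d\la^2}\big|_{0}=\frac{2}{T}\cdot T\cdot\izt D_0^2\be\,dt=2\izt D_0^2\be\,dt$, which is exactly $\izt D_0^2\be(\gz(t))\,dt=\frac{1}{2}\frac{d^2\ene}{d\la^2}\big|_{0}$.

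The main obstacle is less the computation than the correct choice of parametrization. With the parametrization inherited directly from the conjugacy $\mt^\la$, where the speed at time $t$ equals $\bl(t)$, one would instead get $\ene=\izt\bl(t)^2\,dt$, and the second derivative would then contain a spurious term $2\izt(D_0\be)^2\,dt$; it is precisely the passage to the constant-speed parametrization (equivalently, the use of $L^2/T$) that removes this term and produces the clean factor $\frac{1}{2}$. The other point that requires care is the identification $L(\ga_\la)=\tl$, which rests on the fact that the thermostat orbits lie on the unit energy level $SM$ and are therefore unit speed, together with the reparametrization-invariance of length.
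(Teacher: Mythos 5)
Your proof is correct and follows essentially the same route as the paper: both pass to the constant-speed reparametrization $\glb$ on $[0,T]$, use $\ene(\glb)=\tl^{2}/T$ together with $\tl=\izt\bl\,dt$, invoke criticality of the closed geodesic $\ga_{0}$ for the energy to get the first identity, and then use that identity to kill the extra quadratic term when differentiating a second time. Your closing remark about why the naive parametrization (with speed $\bl(t)$) would produce a spurious $2\izt(D_{0}\be)^{2}\,dt$ term is a nice observation, but the argument itself coincides with the paper's.
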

\begin{proof} Since $\left|\gl\right|=1$ we have
\begin{align}
	L(\gl)=\tl=\izt\bl\left(\gz(t)\right)dt.
	\label{e6ptg}
\end{align}
 
Now, we consider a new family of curves $\glb:[0,T]\re M$ given by
\begin{align*}
	\glb(t):=\ga_{\la}(t\tl/T).
\end{align*}
The map $\ld\glb $ gives rise a $C^{\infty}$ variation of the closed geodesic  $\ga_{0}$ by closed curves with period $T$ (however, not necessarily we have $\left|\glbp\right|=1$). Let us denote $\ene_{\la}=\ene(\glb)$. We obtain,
\begin{align*}
	\ene_{\la}=\frac{\tl^{2}}{T}.
\end{align*}
 Differentiating with respect to  $\la$ we obtain,
\begin{align}
	\frac{d\ene_{\la}}{d\la}=\frac{2\tl}{T}\frac{d\tl}{d\la}.
	\label{dpen}
\end{align}
Taking $\la=0$ in (\ref{dpen}), we obtain, from (\ref{e6ptg}), that
\begin{align}
	\izt D_{0}\be\left(\gz(t)\right)dt    = \frac{d\tl}{d\la}\Big|_{\la=0}   =\frac{1}{2} \frac{d\ene}{d\la}\Big|_{\la=0}.
	\label{e93tg}
\end{align}
On the other hand, $\ga_{0}$ is a closed geodesic. Thus it is a critical point of the energy. So, 
\begin{align}
	\frac{d\ene}{d\la}\Big|_{\la=0}=0.
	\label{e97tg}
\end{align}
This, joint with equation (\ref{e93tg}) gives the proof of the first part of the Lemma. 

Moreover, differentiating (\ref{dpen}) with respect to $\la$, evaluating at $\la=0$ and using  (\ref{e6ptg}) we obtain
\begin{align*}
	\izt D_{0}^{2}\be\left(\gz(t)\right)dt    = \frac{d^{2}\tl}{d\la^{2}}\Big|_{\la=0}   =\frac{1}{2} \frac{d^{2}\ene}{d\la^{2}}\Big|_{\la=0}   -  \frac{1}{T}\left(\frac{d\tl}{d\la}\right)^{2}\Big|_{\la=0}.
\end{align*}
Using the first part of the lemma, this completes the proof.
\end{proof}

So, we conclude that
\begin{align}
	\int_{SM}D_{0}\be dm=0 \ \ \ \ \ \ e \ \ \ \ \ \ Var\left(D_{0}\be\right)\equiv0,\ \ \ \  \ \ \ \ \ \ \ \ \ \ \ \ \
\label{evtg}
\end{align}
where $Var$ denotes the variance (as in \cite{Pollicott}). 

Indeed, since $\int_{0}^{T}D_{0}\be\left(\gz(t)\right) dt= 0$, by Livisc's Theorem \cite{Livsic}, we have that the map  $\te\re D_{0}\be(\te)$ is zero up to a co-boundary. Using the properties of the variance, we obtain that $Var\left(D_{0}\be\right)\equiv 0$.

Moreover, since $\bl=\frac{1}{\al^{\la}}$, we have,
\begin{align*}
	D_{\la}^{2}\be=\frac{2}{\left(\al^{\la}\right)^{3}}\left(D_{\la}\al^{\la}\right)^{2}    -    \frac{1}{\left(\al^{\la}\right)^{2}}D_{\la}^{2}\al^{\la}.
\end{align*}
Thus, at $\la=0$, we have,
\begin{align}
	D_{0}^{2}\be=2\left(D_{0}\al\right)^{2}     -D_{0}^{2}\al,
	\label{ed2tg}
\end{align}
once that $\al^{0}\equiv 1$. 

Using equations (\ref{evtg}) e (\ref{ed2tg}) of Theorem\ \ref{teo3.1tg}, we obtain,
\begin{align}
	h'(0)=0, \textrm{ and }\ \ \ \ \ \ \ \ \ \ \ \ \  \ \ \ \ \ \ \ \ \ \ \  
	\label{edhtg}
\end{align}
\begin{align}
	h''(0)=-h(0)    \int_{SM}D_{0}^{2}\be dm.\ \ \ \ \ \  
	\label{ed2htg}
\end{align}
Hence, to conclude the proof of Theorem \ref{teo3.1tg} it is enough to estimate $\int_{SM}D_{0}^{2}\be dm$.

\section{The Variational Field}

In this section, we obtain an useful equation for the variational field.

\begin{lemma}
Let $W=\frac{\partial \glb }{\partial \la}\big|_{\la=0}$ be the variational field associated to the variation $\ld \glb$ of the closed geodesic $\ga_{0}$. Then:
\begin{align*}
	W(0)=W(T)\ \ \ \ \ \ \ \ \ \ \ \ \ \ \ \ \ \ \ \ \ \ \ \ \ \ \ \\\
	\dot{W}(0)=\dot{W}(T)\ \ \ \ \ \ \ \ \ \ \ \ \ \ \ \ \ \ \ \ \ \ \ \ \ \ \ \\\
	\ddot{W}+R(\gz(t),W(t))\gz(t)=\frac{dE_{\la}}{d\la}\Big|_{\la=0}   - \frac{1}{\left|\gz\right|^{2}}\left\langle \frac{dE_{\la}}{d\la}\Big|_{\la=0} , \gz    \right\rangle \gz.
	\end{align*}
\label{lema4.3}
\end{lemma}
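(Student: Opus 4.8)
The plan is to reduce everything to the genuine thermostat orbits $\ga_\la$ and then differentiate the Gaussian Thermostat equation in $\la$ at $\la=0$, using that $E_0\equiv 0$ collapses the complicated terms appearing in \reff{cjtg}. First I would remove the reparametrization: writing $\glb(t)=\ga_\la(t\tl/T)$ and differentiating in $\la$, the chain rule produces a term proportional to $\frac{d\tl}{d\la}\big|_{\la=0}$, which vanishes by Lemma \ref{lema4.1tg} (equations \reff{e93tg} and \reff{e97tg}) because $\ga_0$ is a closed geodesic, hence a critical point of the energy. Consequently $W(t)=\frac{\partial\glb}{\partial\la}\big|_{\la=0}(t)=\frac{\partial\ga_\la}{\partial\la}\big|_{\la=0}(t)$, so I may compute $W$ and its covariant derivatives directly from the orbits $\ga_\la$, which satisfy the clean, un-reparametrized equation.

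For the two boundary identities I would use that each $\glb$ is a closed curve of the \emph{fixed} period $T$, so that $\glb(0)=\glb(T)$ and $\glbp(0)=\glbp(T)$ hold for every $\la$. Differentiating the first relation covariantly in $\la$ at $\la=0$ gives $W(0)=W(T)$. For the second, I would invoke the symmetry of the Levi-Civita connection, $\dcl\glbp=\dct\frac{\partial\glb}{\partial\la}$, so that differentiating $\glbp(0)=\glbp(T)$ in $\la$ and evaluating at $\la=0$ yields $\dct W(0)=\dct W(T)$, i.e. $\dot W(0)=\dot W(T)$.

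The core of the argument is the differential equation. I would start from the orbit equation $\dct\gl=E_\la(\ga_\la)-\frac{1}{|\gl|^2}\langle E_\la(\ga_\la),\gl\rangle\gl$ and apply $\dcl$, evaluating at $\la=0$. Treating $f(\la,t)=\ga_\la(t)$ as a two-parameter map and repeating the commutation of covariant derivatives used to obtain \reff{cjtg}, the left side becomes $\ddot W+R(\gz,W)\gz$. On the right side, differentiation of $E_\la(\ga_\la)$ splits into the explicit $\la$-derivative $\pe$ and a term $\nabla_W E_0$ that vanishes since $E_0\equiv 0$; differentiation of the second summand produces, after the product and quotient rules, only the surviving piece $\frac{1}{|\gz|^2}\langle\pe,\gz\rangle\gz$, every other term carrying a factor of $E_0$ or $\langle E_0,\gz\rangle$ that is zero at $\la=0$. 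This gives exactly the claimed equation.

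The main obstacle I anticipate is the bookkeeping in differentiating the right-hand side: one must track which terms survive and which are annihilated by $E_0\equiv 0$, and handle the quotient $1/|\gl|^2$ and the inner product correctly under the covariant $\la$-derivative. In contrast to the general Jacobi equation \reff{cjtg}, whose $E$-dependent terms are all present for a fixed flow, here the vanishing of $E_0$ is precisely what removes them, leaving the single inhomogeneous forcing term $\pe-\frac{1}{|\gz|^2}\langle\pe,\gz\rangle\gz$; verifying this collapse cleanly, together with the harmlessness of the reparametrization, is the delicate point.
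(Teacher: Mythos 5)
Your proposal is correct and follows essentially the same route as the paper: the two boundary identities from closedness of the curves $\glb$ plus symmetry of the connection, the commutation identity for the two-parameter family to produce $\ddot W + R(\gz,W)\gz$, Lemma \ref{lema4.1tg} to kill the derivative of the reparametrization factor at $\la=0$, and the collapse of the differentiated thermostat equation via $E_0\equiv 0$. The only cosmetic difference is that you discharge the reparametrization at the level of $W$ before differentiating, whereas the paper keeps $\glb$ throughout and drops the $T_\la/T$ factor inside equation \reff{e60tg} using the same vanishing derivative; the computations are otherwise identical.
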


\begin{proof} By definition of $\glb$, we have,
\begin{align*}
	\glb(t)=\gamma_{\la}(tT_{\la}/T)
\end{align*}
where $\glb(t)=\pi\left(\fl(\Theta^{\la}(\te))\right)$.

Hence, $\glb(0)=\glb(T)$ and $\glbp(0)=\glbp(T)$. From the definition of $W(t)$, we obtain $W(0)=W(T)$. The equality $\dot{W}(0)=\dot{W}(T)$ follows from the symmetry of the Riemannian connection. 

Let us consider the surface given by  $(t,\la)\re f(t,\la)$. We recall the following identity from Riemannian geometry
\begin{align*}
	\frac{D}{dt}\frac{D}{dt}\frac{\partial f}{\partial \la}    + R\left(\frac{\partial f}{\partial t} ,\frac{\partial f}{\partial \la} \right)\frac{\partial f}{\partial t}   =    \frac{D}{d\la}\left(\frac{D}{dt}\pat\right).
\end{align*}
Setting $f(t,\la)=\glb$, by the identity above, at $\la=0$ we have,
	\begin{align*}
		\ddot{W}  + R(\gz,W)\gz=\frac{D}{d\la}\left(\frac{D}{dt}\glbp\right)\Big|_{\la=0}.
	\end{align*}
Now, by the definition of the Gaussian Thermostat, we have,
	\begin{align*}
		\frac{D}{dt}\gl=\yygl,
	\end{align*}
Thus,
	\begin{align*}
		\frac{D}{dt}\glbp(t)=\frac{D}{dt}\left[\gl(\frac{tT_{\lambda}}{T})\right]=    \frac{T_{\la}}{T}\left(E_{\lambda}\left(\glbp(t)\right)      -\frac{\left\langle E_{\lambda}    \left(\glbp(t)\right), \glbp(t)       \right\rangle}{{\left|\glbp(t)\right|}^{2}}\glbp(t)\right).
	\end{align*}
Hence
\begin{align}
	\dcl\left(\dct\glbp\right)\Bigg|_{\la=0}=\dcl\left[ \frac{T_{\la}}{T}\left(E_{\lambda}       \left(\glbp(t)\right)      -\frac{\left\langle E_{\lambda}    \left(\glbp(t)\right), \glbp(t)       \right\rangle}{{\left|\glbp(t)\right|}^{2}}\glbp(t)\right)          \right]\Bigg|_{\la=0}\nonumber\\
	=      \dcl\left[\left( E_{\lambda}\left(\glbp(t)\right)      -\frac{\left\langle E_{\lambda}    \left(\glbp(t)\right), \glbp(t)       \right\rangle}{{\left|\glbp(t)\right|}^{2}}\glbp(t)\right)          \right]\Bigg|_{\la=0}.\ \ \  \ 
	\label{e60tg}
\end{align}
Indeed, Lemma\ \ref{lema4.1tg},\ says that  $\frac{d}{dt}(T_{\la}/T)|_{\la=0}=dE/dt|_{\la=0}=0$ and $T_{0}=T$. 

Now, we compute each term in  (\ref{e60tg}).

We have,
\begin{align*}
	\dcl E_{\la}\left(\gl(t)\right)\Big|_{\la=0}=\frac{dE}{d\la}\Big|_{\la=0}+  \nabla_{W}E_{0}\\
	= \frac{dE}{d\la}\Big|_{\la=0}\ \ \ \ \ \ \ \ \ \ \  \ 
\end{align*}
Since,  by hypothesis, $E_{0}\equiv0$. 

We also have,
\begin{align*}
	\dcl\left(  \frac{\left\langle E_{\lambda}    \left(\glbp(t)\right), \glbp(t)       \right\rangle}{{\left|\glbp(t)\right|}^{2}}    \glbp(t)          \right) \Bigg|_{\la=0}   =      \dcl\left(  \frac{\left\langle E_{\lambda}    \left(\glbp(t)\right), \glbp(t)       \right\rangle}{{\left|\glbp(t)\right|}^{2}}          \right) \Bigg|_{\la=0}\gz         \\
				+ \frac{\left\langle E_{0},\gz(0)\right\rangle}{\left|\gz(0)\right|^{2}}\dcl\gl\big|_{\la=0}\ \ \ \ \ \ \ \ \ \ \ \ \ \ \ \ \\
\ 	=      \dcl\left(  \frac{\left\langle E_{\lambda}    \left(\glbp(t)\right), \glbp(t)       \right\rangle}{{\left|\glbp(t)\right|}^{2}}          \right) \Bigg|_{\la=0}\gz,
\end{align*}
and
\begin{align*}
\dcl\left(  \frac{\left\langle E_{\lambda}    \left(\glbp(t)\right), \glbp(t)       \right\rangle}{{\left|\glbp(t)\right|}^{2}}          \right) \Bigg|_{\la=0}   =    \frac{1}{\left|\gz\right|^{2}}    \left(  \frac{d}{d\la} \left(\left\langle E_{\la}, \glbp(t) \right\rangle\right)   \Big|_{\la=0}          \right) \ \\
\ -      \left(        \left\langle E_{0},\gz\right\rangle   2 \left\langle \dcl \glbp , \glbp \right\rangle                                     \right)\frac{1}{\left|\gz\right|^{4}}   \\
 = \frac{1}{\left|\gz\right|^{2}}\left\langle \frac{dE_{\la}}{d\la}\big|_{\la=0} , \gz    \right\rangle \gz   .  \ \ \ \ \ \ \ \  \ 
\end{align*}
\noi
Thus, we conclude that 
\begin{align*}
	\dcl\left(\dct\glbp\right)\Bigg|_{\la=0}	=  \frac{dE_{\la}}{d\la}\Big|_{\la=0}   - \frac{1}{\left|\gz\right|^{2}}\left\langle \frac{dE_{\la}}{d\la}\Big|_{\la=0} , \gz    \right\rangle \gz.
\end{align*}
This completes the proof.
\end{proof}

\section{Proof of Theorem \ref{teo1.1tg}}

In this section we prove Theorem \ref{teo1.1tg}.

Let $\Lambda$ the vector space of piecewise differentiable vector fields $V:[0,T]\re \ml$ along the closed geodesic $\ga_{0}:[0,T]\re M$ such that $V(0)=V(T)$. 

We also consider the index form of $\ga_{0}$, $I:\ml\times\ml\re\rea$ given by
\begin{align}
	I(U,V)=\int_{0}^{T}  \left\langle \dot{U},\dot{V}\right\rangle   -   \left\langle R(\gz,U)\gz,V\right\rangle dt.
	\label{fitg}
\end{align}
Since the vector fields of  $\ml$ satisfy $V(0)=V(T)$, we obtain, 
\begin{align}
	\frac{1}{2}\frac{d^{2}{\mathcal{E}}}{d\la}\Big|_{\la=0}=I(W,W),
	\label{e69tg}
\end{align}
where as in Lemma \ref{lema4.3}, $W$ is the variational field of the variation  $\ld \bar{\gamma}_{\lambda}$. 

Since $\ptt^0$ is Anosov, then the metric has no conjugated points. So, by Morse's Index Theorem, we obtain,
\begin{align}
	I(V,V)\geq 0.
	\label{e70tg}
\end{align}
 for every $V\in \ml$.  By Lemma \ref{lema4.3}, we have that $W(t)\in \ml$. 
 
 Obviously,  $E'_0\left(t\right)-\frac{\left\langle E'_{0}(t),\dot{\gamma_{0}}(t)\right\rangle}{{\left|\dot{\gamma_{0}}(t)\right|}^{2}}\dot{\gamma}_{0}(t)\in \ml$, where $E'_0(t)=E'\left(\ga_0(t)\right)$. 
 
 In the following, we will omit $t$.  We have that  $\left[W+x\left(E'_0-\ygpz\right)\right]\in\ml$ for every $x\in\rea$. Thus, using (\ref{e70tg}) we have,
 \begin{align*}
	 I\left(W+x\left(E'_0-\ygpz\right),W+x\left(E'_0-\ygpz\right)\right)  \geq 0.
 \end{align*}
Therefore,
\begin{align}
	I(W,W)\geq -2x I\left(W,E'_0-\ygpz\right)   -x^{2}I\left(E'_0-\ygpz,E'_0-\ygpz\right).
	\label{e73tg}
\end{align}

Now, we compute each term of  (\ref{e73tg}). We have, 

\begin{align}
	I\left({\pe}-\frac{\left\langle {\pe},\dot{\gamma_{0}}\right\rangle}{{\left|\dot{\gamma_{0}}\right|}^{2}}\dot{\gamma_{0}},   {\pe}-\frac{\left\langle {\pe},\dot{\gamma_{0}}\right\rangle}{{\left|\dot{\gamma_{0}}\right|}^{2}}\dot{\gamma_{0}}       \right) = I(\pe,\pe)  \ \ \ \ \ \ \ \ \ \ \ \ \ \ \ \ \ \nonumber\\
	   - 2I\left(\pe,\ygpz\right)  + I\left(\ygpz,\ygpz\right).
		\label{e001tg}
\end{align}
By the definition of the index form of $\ga_{0}$, given by (\ref{fitg}),  we have
 \begin{align}
	 I\left(\pe,\pe\right)= \int_{0}^{T}\left\langle \nabla_{\gz} \pe, \nabla_{\gz} \pe  \right\rangle     -      \left\langle R(\gz,\pe)\gz,\pe\right\rangle dt\ \ \ \nonumber\\
	= \int_{0}^{T}{\left|\nabla_{\gz} \pe \right|}^{2}     -      \left\langle R(\gz,\pe)\gz,\pe\right\rangle dt.\  \ \ \ \ \ \ \ \ \ \ \ \ \ \ 
	\label{e0002tg}
 \end{align}
We also have,
\begin{align}
	I\left(\pe,\ygpz\right)=   \int_{0}^{T}\left\langle \nabla_{\gz} \pe,\frac{D}{dt}\left(\ygpz\right)  \right\rangle \ \ \ \ \nonumber\\
			-      \left\langle R(\gz,\pe)\gz,\ygpz\right\rangle dt\nonumber \ \ \ \ \ \ \ \ \ \\
	 =\int_{0}^{T}\left\langle \nabla_{\gz} \pe,\frac{D}{dt}\left(\ygpz\right)  \right\rangle dt.  
	\label{e002tg}
\end{align}
However,
\begin{align*}
	\frac{D}{dt}\left(\ygpz\right)=\frac{d}{dt}\left(\frac{\left\langle \pe,\gz\right\rangle}{\left|\gz\right|^{2}}\right)\gz  + \frac{\left\langle \pe,\gz\right\rangle}{\left|\gz\right|^{2}}\frac{D}{dt}\gz  \ \ \ \ \ \ \ \ \ \  \ \ \\
	= \frgz\left\langle \nabla_{\gz}\pe,\gz\right\rangle\gz.   \ \ \ \ \ \ \ \ \ \ \ \ \ \ \ \ \ \ \  \ \ \ \ \ \ \ \ \ \ \ \ \ \
\end{align*}
Thus, using (\ref{e002tg}), we obtain
\begin{align}
	I\left(\pe,\ygpz\right)=   \int_{0}^{T}\frac{\left\langle \nabla_{\gz} \pe,\gz \right\rangle ^{2}}{\left|\gz\right|^{2}} dt.
	\label{e005tg}
\end{align}
Moreover,
\begin{align}
	I\left(\ygpz,\ygpz\right)=\int_{0}^{T}\left\langle \frac{D}{dt}\left(\ygpz\right),\frac{D}{dt}\left(\ygpz\right)\right\rangle       \nonumber \\
	- \left\langle R\left(\gz,\ygpz\right)\gz,\ygpz\right\rangle dt\ \ \ \ \  \nonumber\\
	= \int_{0}^{T}\frac{\left\langle \nabla_{\gz} \pe,\gz \right\rangle ^{2}}{\left|\gz\right|^{2}} dt.    \ \ \ \ \ \ \ \ \ \ \ \  \ \ \ \ \ \ \ \ \ \ \ \ \ \ \ \ \ \  
	\label{e007tg}
\end{align}

Replacing this terms,  (\ref{e0002tg}), (\ref{e005tg}) e (\ref{e007tg}), in  (\ref{e001tg}), we obtain
\begin{align}
		I\left(\pe-\frac{\left\langle \pe,\dot{\gamma_{0}}\right\rangle}{{\left|\dot{\gamma_{0}}\right|}^{2}}\dot{\gamma_{0}},   \pe      -   \frac{\left\langle \pe,\dot{\gamma_{0}}\right\rangle}{{\left|\dot{\gamma_{0}}\right|}^{2}}\dot{\gamma_{0}}       \right)     =   \int_{0}^{T}{\left|\nabla_{\gz} \pe \right|}^{2}     -\frac{\left\langle \nabla_{\gz} \pe,\gz \right\rangle ^{2}}{\left|\gz\right|^{2}}       \nonumber\\
		-	\left\langle R(\gz,\pe)\gz,\pe\right\rangle dt.\ \ \ \ \ \ \ \ \ \ \ \
		\label{Btg}
\end{align}

We also have,
\begin{align}
	I\left(W,\pe-\ygpz\right)=I\left(W,\pe\right)    -  I\left(W,\ygpz\right).
	\label{009tg}
\end{align}

But,
\begin{align*}
	I\left(W,\pe\right)=\int_{0}^{T}\left\langle \ddot{W},\pe\right\rangle     -   \left\langle R(\gz,W)\gz,\pe\right\rangle dt\ \ \nonumber\\
	=\int_{0}^{T}-{\left|\pe\right|}^{2}   +   \frac{\left\langle \nabla_{\gz} \pe,\gz \right\rangle ^{2}}{\left|\gz\right|^{2}}dt.\ \ \ \ \ \ \ \ \ \ \ 
\end{align*}

Lemma \ref{lema4.3} says that
\begin{align*}
	I\left(W,\ygpz\right)=\int_{0}^{T}\left\langle \ddot{W},\ygpz\right\rangle     -   \left\langle R(\gz,W)\gz,\ygpz\right\rangle dt\     \ \ \\
	= \int_{0}^{T}   \left\langle R(\gz,W)\gz    \pe +   \ygpz,\pe\right\rangle dt \ \ \ \ \ \ \ \ \ \ \ \ \ \ \ \   \\
	=0.\    \ \ \ \ \ \ \ \ \ \ \ \ \ \ \ \ \ \ \ \ \ \ \ \ \ \ \ \ \ \ \ \ \ \ \ \ \ \ \ \ \ \ \ \ \ \ \ \ \ \ \ \ \ \ \ \ \ \ \ \ \ \ \ \ \ \ \ \ \ \ \ \ \ \ \ \ \ 
\end{align*}

Therefore,
\begin{align}
	I\left(W,\pe- \ygpz \right)	=  \int_{0}^{T}-\left|\pe\right|^{2}   +   \frac{\left\langle \nabla_{\gz} \pe,\gz \right\rangle ^{2}}{\left|\gz\right|^{2}}dt.
	\label{Atg}
\end{align}

In the other hand, using Lemma\ \ref{lema4.1tg} and equation (\ref{e69tg}), we obtain
\begin{align*}
	\int_{0}^{T} {D}_{0}^{2}\be \left(\dot{\gamma}_{0}\right) dt = \frac{1}{2} \frac{{d}^{2}E}{d\la^{2}} \big|_{\lambda=0}=I(W,W).
\end{align*}

This give us,
\begin{align}
	\int_{0}^{T}D_{0}^{2}\be\left(\gz\right) dt 	\geq       2x \int_{0}^{T}   \left|\pe\right|^{2}   -   \frac{\left\langle \nabla_{\gz} \pe,\gz \right\rangle ^{2}}{\left|\gz\right|^{2}}dt \ \ \ \ \ \ \ \ \ \ \ \ \ \ \ \ \ \ \ \ \ \ \ \ \ \ \ \ \  \nonumber\\
					- x^{2}\int_{0}^{T} {\left|\nabla_{\gz} \pe \right|}^{2}     - \frac{\left\langle \nabla_{\gz} \pe,\gz \right\rangle ^{2}}{\left|\gz\right|^{2}}     -      \left\langle R(\gz,\pe)\gz,\pe\right\rangle dt.
	\label{e75tg}
\end{align}

Now, since $\vp=\vp^0$ is a volume-preserving Anosov flow, we have that the set of probability measures supported on periodic orbits is dense in the set of invariant probability measures. Hence, by (\ref{e75tg}) we obtain
\begin{align}
	\int_{SM}D_{0}^{2}\be\left(\gz\right) dm \geq   				2x \int_{SM}\left|\pe\right|^{2}   -   \frac{\left\langle \nabla_{\gz} \pe,\gz \right\rangle ^{2}}{\left|\gz\right|^{2}}dm \ \ \ \ \ \ \ \ \ \ \ \ \ \ \ \ \ \ \ \ \ \ \ \ \ \ \ \ \ \ \ \ \ \ \ \ \ \ \  \nonumber\\       
	- x^{2}\int_{SM}{\left|\nabla_{\gz} \pe \right|}^{2}     -\frac{\left\langle \nabla_{\gz} \pe,\gz \right\rangle ^{2}}{\left|\gz\right|^{2}}     -      \left\langle R(\gz,\pe)\gz,\pe\right\rangle dm:= 2x A -x^{2}B,\ \ \ \ 
	\label{e75.1tg}
\end{align}

for every $x\in\rea$. Since, $B=I(V,V)$, with $V=E'_0-\ygpz\in \Lambda$, we obtain from (\ref{e70tg}) that $B\geq0$. 

From the above equation, if  $Z\neq0$ (which is equivalent to $A>0$) then $B>0$. 

Now, we consider, $f:\rea\re\rea$ given by $f(x)=2x A-x^{2}B$. Then,
\begin{align*}
	f'(x)=2A-2x B\ \  \ \ \ \ \textrm{e}\ \ \ \ \ f''(x)=-2B. 
\end{align*}

We have that  $0=f'\left(\frac{A}{B}\right)$ and $f''\left(\frac{A}{B}\right)=-2B<0$ . Indeed, by hypothesis $A>0$, hence, $B>0$.  This implies that  $f$ has a maximum at $\frac{A}{B}$, and its value is $f\left(\frac{A}{B}\right)=\frac{A^{2}}{B}>0$. 

Hence, by (\ref{ed2htg}) we have
\begin{align*}
	h''(0)=-h(0)  \int_{SM} D_{0}^{2}\be dm      \leq -h(0)(2x A- x^{2}B)    =   -h(0) \frac{A^{2}}{B}  < 0,
\end{align*}
where in the last equality we take $x=A/B$. 

In particular,
\begin{align*}
	h''(0)\leq -h(0) \frac{\left[\int_{SM}\left( \left|\pe\right|^{2}  - \frac{\left\langle \pe,v\right\rangle^{2}}{\left|v\right|^{2}}                 \right)      dm            \right]^{2}}{  \int_{SM} \left( \left|\nabla_{v}\pe\right|^{2} -\frac{\left\langle \nabla_{v}\pe,v\right\rangle^{2}}{\left|v\right|^{2}}  - \left\langle R(v,\pe)v,\pe\right\rangle                     \right)dm                 } < 0.
\end{align*}

And this completes the proof.

\end{document}